\crefname{Th}{Theorem}{Theorems}
\crefname{Prop}{Proposition}{Propositions}
\crefname{Lemma}{Lemma}{Lemmas}
\crefname{Cor}{Corollary}{Corollaries}
\crefname{Remark}{Remark}{Remarks}
\crefname{Def}{Definition}{Definitions}
\crefname{Example}{Example}{Examples}
\crefname{Question}{Question}{Questions}
\crefname{section}{Section}{Sections}
\newtheorem{Th}{Theorem}
\newtheorem{Lemma}{Lemma}
\newtheorem{Cor}{Corollary}
\theoremstyle{definition}
\newtheorem{Remark}{Remark}
\newtheorem{Def}{Definition}
\newtheorem{Question}{Question}
\newcommand{\cb}{{\cal B}}
\newcommand{\cc}{{\cal C}}
\newcommand{\cf}{{\cal F}}
\newcommand{\cg}{{\cal G}}
\newcommand{\cp}{{\cal P}}
\newcommand{\cl}{{\cal L}}
\newcommand{\ot}{\otimes}
\newcommand{\ov}{\overline}
\newcommand{\A}{\mathbb{A}}
\newcommand{\Z}{{\mathbb{Z}}}
\newcommand{\N}{{\mathbb{N}}}
\newcommand{\vep}{\varepsilon}
\newcommand{\raz}{\mathbbm{1}}
\newcommand*\samethanks[1][\value{footnote}]{\footnotemark[#1]}
\title{Hereditary subshifts whose simplex of invariant measures is Poulsen}
\author{Joanna Ku\l{}aga-Przymus\thanks{Research supported by Narodowe Centrum Nauki grant  UMO-2014/15/B/ST1/03736.} \and  Mariusz Lema\'{n}czyk\samethanks \and Benjamin Weiss}
\begin{document}
\maketitle

\begin{abstract}
We give a sufficient condition for the simplex of invariant measures for a hereditary system to be Poulsen. In particular, we show that this simplex is Poulsen in case of positive entropy $\mathscr{B}$-free systems. We also give an example of a positive entropy hereditary system whose simplex of invariant measures is not Poulsen.
\end{abstract}

\section{Introduction}

Recall that given a homeomorphism $T$ of a compact metric space $X$, the set $\mathcal{P}(T,X)$ of probability Borel $T$-invariant measures on $X$ is a Choquet simplex, with the set of extremal points equal to $\cp^e(T,X)$ -- the subset of ergodic measures (see e.g. \cite{MR648108},~\cite{MR1835574} or \cite{MR1958753}, page 95). Recall also that the set of invariant measures of a minimal flow may have the affine-topological structure of an arbitrary metrizable Choquet simplex~\cite{MR1135237}.\footnote{Moreover, all such simplices are obtained for the class of zero entropy Toeplitz subshifts.}

\begin{Def}
We say that a simplex is \emph{Poulsen} if it is non-trivial and the set of its extreme points is dense.
\end{Def}
Recall that every Choquet simplex is affinely homomorphic to a face of a Poulsen simplex~\cite{MR481894}. Moreover, up to affine isomorphism, there is only one Poulsen simplex~\cite{MR500918}. A natural question arises:
\begin{Question}\label{q1}
When is $\cp(T,X)$ a Poulsen simplex?
\end{Question}

We deal with \cref{q1} in case of \emph{hereditary} subshifts, i.e., subshifts $X\subset \{0,1\}^\Z$ with the additional property that for $x,y\in\{0,1\}^\Z$ such that $x\in X$ and $y\leq x$ (coordinatewise), we have $y\in X$ (recall that $X\subset \{0,1\}^\Z$ is called a \emph{subshift} if it is closed and invariant under the left translation $S$).\footnote{In other words, the heredity of $X$ means that this set is closed under coordinatewise multiplication by arbitrary 0-1-sequences.}

Given a subshift $X\subset \{0,1\}^\Z$, we define $\widetilde{X}$ as the smallest hereditary subshift containing $X$, i.e.
$$
\widetilde{X}:=\{y\in\{0,1\}^{\Z}:y\leq x\text{ coordinatewise for some }x\in X\}.
$$
Let $M\colon X\times \{0,1\}^{\Z}\to \{0,1\}^{\Z}$ be given by
$$
M(x,y)(n)=x(n)\cdot y(n)\text{ for each }n\in\Z.
$$
Clearly $M(X\times \{0,1\}^{\Z})=\widetilde{X}$. Suppose now additionally that there exists $\nu\in \mathcal{P}^e(S,X)$ such that
\begin{equation}\label{joiningtype}
\mathcal{P}^e(S,\widetilde{X})=\{M_\ast(\rho) : \rho\in \mathcal{P}^e(S\times S,X\times \{0,1\}^\Z), \rho|_X=\nu \}.
\end{equation}

Our main result is the following:
\begin{Th}\label{tmain}
Suppose that~\eqref{joiningtype} holds. Then $\mathcal{P}(S,\widetilde{X})$ is Poulsen provided that $h_{top}(S,\widetilde{X})>0$.
\end{Th}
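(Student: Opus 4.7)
The plan is to use condition~\eqref{joiningtype} to transfer the problem to the simplex of joinings of $\nu$ with the full shift on the second coordinate, and then to approximate arbitrary convex combinations of ergodic joinings by ergodic ones, using the positive entropy of $\widetilde X$ as the source of randomness.

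First, introduce the set $J_\nu:=\{\rho\in\mathcal P(S\times S,X\times\{0,1\}^\Z):\rho|_X=\nu\}$, a compact convex metrizable set whose extreme points $J_\nu^e$ are exactly its ergodic elements. I would verify that $M_*\colon J_\nu\to\mathcal P(S,\widetilde X)$ is a continuous affine surjection: given $\mu\in\mathcal P(S,\widetilde X)$, decompose $\mu=\int\mu_s\,d\xi(s)$ ergodically and use~\eqref{joiningtype} to measurably lift each $\mu_s$ to some $\rho_s\in J_\nu^e$; then $\rho:=\int\rho_s\,d\xi(s)\in J_\nu$ satisfies $M_*\rho=\mu$. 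Since $M_*(J_\nu^e)\subset\mathcal P^e(S,\widetilde X)$ by~\eqref{joiningtype}, it suffices to prove that $J_\nu^e$ is dense in $J_\nu$, because then $M_*$ carries that density to density of $\mathcal P^e(S,\widetilde X)$ in $\mathcal P(S,\widetilde X)$.

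By a standard iteration argument, density of $J_\nu^e$ in $J_\nu$ reduces to approximating $\lambda\rho_1+(1-\lambda)\rho_2$ by ergodic joinings for any $\rho_1,\rho_2\in J_\nu^e$ and $\lambda\in(0,1)$. This is where the hypothesis $h_{top}(S,\widetilde X)>0$ enters: it rules out the degenerate case $\nu=\delta_{\overline 0}$ and, via the variational principle combined with~\eqref{joiningtype}, produces some $\rho_*\in J_\nu^e$ with $h(M_*\rho_*)>0$, which furnishes a reservoir of randomness on the $\{0,1\}^\Z$-coordinate coupled to $\nu$. I would then use a Kakutani--Rokhlin tower in $(X,\nu,S)$ together with an independent Bernoulli-$(\lambda,1-\lambda)$ labelling of tower levels: on levels of type $1$ the second coordinate is drawn from the $\rho_1$-conditional distribution given the first, and on levels of type $2$ from the $\rho_2$-conditional one. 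Keeping the first coordinate unchanged forces the joining to remain in $J_\nu$; independence of the labelling delivers ergodicity; and as the tower height grows and the base shrinks, a standard cylinder-set computation shows that the resulting $\tilde\rho_n$ converge weakly to $\lambda\rho_1+(1-\lambda)\rho_2$.

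The main obstacle will be the last construction: realizing the auxiliary Bernoulli labelling as a bona fide invariant measurable object on $X\times\{0,1\}^\Z$ compatible with the fixed first marginal $\nu$, and then verifying ergodicity of each $\tilde\rho_n$. Heredity of $\widetilde X$ permits arbitrary modification of the $\{0,1\}$-content without leaving $\widetilde X$, and positive entropy of $\widetilde X$ is precisely what guarantees enough room in the second coordinate to accommodate such an auxiliary random object; remove either assumption and the construction collapses, which is consistent with the counterexample promised in the abstract. Once the construction is carried out, applying $M_*$ converts the approximation $\tilde\rho_n\to\lambda\rho_1+(1-\lambda)\rho_2$ into the required approximation $M_*\tilde\rho_n\to\lambda\mu_1+(1-\lambda)\mu_2$ by ergodic measures in $\mathcal P(S,\widetilde X)$, completing the proof.
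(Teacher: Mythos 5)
Your global strategy (reduce to the set $J_\nu$ of invariant measures on $X\times\{0,1\}^{\Z}$ with first marginal $\nu$, push forward by $M$, and verify the midpoint condition by interleaving two ergodic measures along a tower with a two-valued labelling) is in the same spirit as the paper's proof, which also fixes two ergodic measures on $\widetilde X$, lifts the problem to a generic point $\eta_\nu$ for $\nu$, and alternates blocks chosen for $\nu_1$ and $\nu_2$ according to an auxiliary labelling process. However, there is a genuine gap at the decisive step: you assert that ``independence of the labelling delivers ergodicity'' of the measures $\tilde\rho_n$, and this is precisely the point that carries all the difficulty. Two separate issues are hidden there. First, the product of $(X,\nu,S)$ with the labelling system must be shown to be ergodic; a product of two ergodic systems need not be ergodic, and the paper arranges this by taking the labelling system \emph{disjoint} from $(X,\nu,S)$ — a mixing Markov shift (a K-system, hence disjoint from zero-entropy systems) when $h(\nu)=0$, and a generic automorphism (disjoint by del Junco's theorem) in general, combined with Alpern's lemma to produce the two tower heights $n_0$ and $n_0+1$. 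Second, and more seriously, your construction \emph{resamples} the second coordinate column-by-column from the conditional distributions of $\rho_1$ and $\rho_2$; the resulting measure is an extension of the product of $\nu$ with the labelling, and its ergodicity does not follow from ergodicity of that product — it would need its own (nontrivial) argument. The paper avoids this entirely by making the substitution deterministic: the blocks $R_1(w)$, $R_2(w)$ are fixed functions of the $X$-block $w$ and the label, so the final point $\ov\eta$ is obtained by a finite code from a point generic for an ergodic product measure, and is therefore automatically generic for an ergodic measure.

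A secondary but real misreading concerns the role of the hypothesis $h_{top}(S,\widetilde X)>0$. It is not a ``reservoir of randomness'' needed to run the construction — the second coordinate is the full shift, so there is always room, and the approximation argument works for any $\nu$ satisfying~\eqref{joiningtype}. Positive topological entropy is used only to guarantee that the simplex is non-trivial (in the zero-entropy case $\mathcal{P}(S,\widetilde X)$ reduces to the single point $\delta_{(\dots,0,0,0,\dots)}$ and hence is not Poulsen by definition). Your appeal to a measure $\rho_*$ with $h(M_*\rho_*)>0$ plays no role in the construction you then describe, and the claim that ``remove either assumption and the construction collapses'' is not correct in the form stated.
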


Notice also that in the zero entropy case, i.e., when $h_{top}(S,\widetilde{X})=0$, we have  $\mathcal{P}(S,\widetilde{X})=\{\delta_{(\dots,0,0,0,\dots)}\}$ (see~\cite{MR3007694}), in particular, $\mathcal{P}(S,\widetilde{X})$ is not Poulsen.

We apply \cref{tmain} in the following two situations:
\begin{enumerate}[(i)]
\item
for hereditary systems $(S,\widetilde{X})$ with $(S,X)$ being uniquely ergodic,
\item
for $\mathscr{B}$-free systems.
\end{enumerate}
More precisely, in case (i), we have:
\begin{Cor}\label{wn}
Suppose that $(S,X)$ is uniquely ergodic. Then $\mathcal{P}(S,\widetilde{X})$ is Poulsen, provided that $h_{top}(S,\widetilde{X})>0$.
\end{Cor}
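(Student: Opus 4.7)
The plan is to reduce \cref{wn} to \cref{tmain} by verifying~\eqref{joiningtype} with $\nu$ equal to the unique (automatically ergodic) $S$-invariant measure on $X$. The inclusion $\supseteq$ in~\eqref{joiningtype} is immediate: $M$ intertwines $S\times S$ with $S$, so $M_\ast\rho$ is $S$-invariant on $\widetilde X$, and ergodicity passes to factor measures. The real content is the reverse inclusion -- showing that every $\mu\in\mathcal{P}^e(S,\widetilde X)$ can be written as $M_\ast\rho$ for some ergodic joining $\rho$ with $\rho|_X=\nu$.

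To produce such a joining I would introduce the closed, $S\times S$-invariant set
\[
K := \{(x,y)\in X\times\{0,1\}^\Z : y\leq x \text{ coordinatewise}\}\subseteq X\times\widetilde X,
\]
and observe that $M(x,y)=y$ on $K$, so $M|_K$ coincides with the projection on the second coordinate; surjectivity of this projection onto $\widetilde X$ is precisely the defining property of $\widetilde X$, so $M|_K$ is a topological factor map of compact metric $S$-systems. A standard factor-lifting argument (average the image of a Borel section of $M|_K$ along $(S\times S)$-orbits and pass to a weak-$\ast$ limit, or apply Markov--Kakutani to the affine compact set $\{\rho\in\mathcal P(K): M_\ast\rho=\mu\}$) then produces an $S\times S$-invariant probability $\rho$ supported on $K$ with $M_\ast\rho=\mu$. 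Its $X$-marginal is $S$-invariant on $X$, hence equals $\nu$ by unique ergodicity, and we may regard $\rho$ as a joining on $X\times\{0,1\}^\Z$.

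It remains to replace $\rho$ by an \emph{ergodic} joining enjoying the same two properties, and this is the only genuinely delicate step. Decomposing $\rho=\int\rho_\omega\,dP(\omega)$ into ergodic components of the $(S\times S)$-action, each $\rho_\omega|_X$ is an ergodic measure on $X$ and therefore equals $\nu$, while each $M_\ast\rho_\omega$ is ergodic on $\widetilde X$. The identity $\mu=\int M_\ast\rho_\omega\,dP(\omega)$ is then an integral representation of the extreme point $\mu\in\mathcal P(S,\widetilde X)$ by extreme points of the same simplex; uniqueness of such a representation forces $M_\ast\rho_\omega=\mu$ for $P$-almost every $\omega$, and any such $\rho_\omega$ realises $\mu$ as required. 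This establishes~\eqref{joiningtype}, and \cref{tmain} then delivers the conclusion.
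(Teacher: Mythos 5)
Your argument is correct, and it reduces the corollary to \cref{tmain} by verifying \eqref{joiningtype} exactly as the paper does; in particular your ergodic-decomposition-plus-extremality endgame is precisely what the authors compress into ``it follows by considering the ergodic decomposition of $\rho$''. The only genuine difference is how the (a priori non-ergodic) lift $\rho$ is produced. The paper works with points: it takes a generic point $x$ for $\mu$, chooses $y\in X$ with $x\le y$ (possible by the definition of $\widetilde X$), and takes for $\rho$ a weak-$\ast$ limit of empirical measures of the pair $(y,x)$ along a subsequence; unique ergodicity of $(S,X)$ makes $y$ generic for $\nu$, so $\rho|_X=\nu$ comes for free, and $M_\ast\rho=\mu$ because $M(y,x)=x$ is generic for $\mu$. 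You instead lift $\mu$ through the topological factor map $M|_K\colon K\to\widetilde X$ by an abstract averaging or Markov--Kakutani argument and then invoke unique ergodicity only to identify the $X$-marginal. Both routes are standard and both work; the paper's pointwise version is more elementary (no measurable selection or fixed-point theorem needed), while yours is slightly more robust, since the lifting step makes no use of unique ergodicity and would survive under any other hypothesis pinning down the first marginal. The one place to be slightly careful in your version is the existence of \emph{some} $\rho_0\in\mathcal{P}(K)$ with $M_\ast\rho_0=\mu$ before averaging, but this is guaranteed by surjectivity of the continuous map $M|_K$ between compact metric spaces, which you did establish.
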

\begin{proof}
Given $\mu\in \mathcal{P}^e(S,\widetilde{X})$, let $x$ be a generic point for $\mu$ and let $y\in X$ be such that $x\leq y$. Clearly, $(y,x)\in X\times \{0,1\}^\Z$ is quasi-generic for some measure $\rho$ with $\rho|_X=\nu$. Moreover, $M_\ast(y,x)=x$ and it follows immediately that $M_\ast(\rho)=\mu$. It follows by considering the ergodic decomposition of $\rho$ that~\eqref{joiningtype} holds and we can apply~\cref{tmain}.
\end{proof}
To state the result in case (ii), we need to recall first some basic notions. Let $\mathscr{B}\subset \N$. The set $\cf_{\mathscr{B}}:=\Z\setminus\bigcup_{b\in\mathscr{B}}b\Z$ is called the $\mathscr{B}$-{\em free set}. Let $\eta:=\raz_{\cf_\mathscr{B}}\in \{0,1\}^\Z$ and consider $(S,X_\eta)$ with
$$
X_\eta=\ov{\{S^k\eta:k\in\Z\}}\subset\{0,1\}^{\Z}.
$$
Note that in this case $\widetilde{X}_\eta\subset X_{\mathscr{B}}$, where $(S,X_{\mathscr{B}})$ is the $\mathscr{B}$-{\em admissible subshift}~\cite{Ba-Ka-Ku-Le}:
$z\in X_{\mathscr{B}}$ if and only if we have $|\{n\in\Z: z(n)=1\}\text{ mod }b|<b$ for each $b\in \mathscr{B}$. There is a natural $S$-invariant measure $\nu_\eta$ on $X_\eta$, called the Mirsky measure, for which $\eta$ is quasi-generic~\cite{Ba-Ka-Ku-Le}. This means that there exists an increasing sequence $(N_k)_{k\geq 1}\subset \N$ such that
$$
\frac{1}{N_k}\sum_{n\leq N_k}f(S^n\eta)=\int f\ d\nu_\eta
$$
for any continuous function $f$ on $X_\eta$. Moreover, $\nu_\eta$ is of zero entropy and we have:
\begin{Th}[\cite{Ba-Ka-Ku-Le}, for $\mathscr{B}$ pairwise coprime with $\sum_{b\in\mathscr{B}}1/b<\infty$, see~\cite{MR3356811}]\label{t:bkkl1}
For any $\mathscr{B}\subset\N$,
$$
\mathcal{P}^e(S,\widetilde{X}_\eta)=\{M_\ast(\rho) : \rho \in \cp^e(S\times S, X_\eta\times\{0,1\}^{\Z}), \rho|_{X_\eta}=\nu_\eta\}.
$$
\end{Th}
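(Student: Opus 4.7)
I would prove the two inclusions separately. The inclusion ``$\supseteq$'' is immediate: $M\colon X_\eta\times\{0,1\}^\Z\to\widetilde{X}_\eta$ is a continuous surjection intertwining $S\times S$ with $S$, so for any $\rho\in\cp^e(S\times S,X_\eta\times\{0,1\}^\Z)$ with $\rho|_{X_\eta}=\nu_\eta$, the pushforward $M_\ast\rho$ is $S$-invariant, concentrated on $\widetilde{X}_\eta$, and ergodic (the $M$-preimage of any $S$-invariant Borel set is $S\times S$-invariant, hence $\rho$-trivial).

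For the substantive inclusion ``$\subseteq$'', fix $\mu\in\cp^e(S,\widetilde{X}_\eta)$ and pick a point $z\in\widetilde{X}_\eta$ generic for $\mu$. Heredity yields $y\in X_\eta$ with $z\leq y$, so $M(y,z)=z$. By compactness, extract a subsequence $(N_k)$ along which
\[
\sigma_{N_k}:=\frac{1}{N_k}\sum_{n=0}^{N_k-1}\delta_{(S^ny,\,S^nz)}
\]
converges weakly to some $S\times S$-invariant probability measure $\rho_0$ on $X_\eta\times\{0,1\}^\Z$. Continuity of $M$ combined with the genericity of $z$ gives $M_\ast\rho_0=\mu$ and $\rho_0|_{\{0,1\}^\Z}=\mu$; the sole remaining task is to identify $\rho_0|_{X_\eta}$ with $\nu_\eta$.

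This is the main obstacle, and the only place where the specific structure of $\mathscr{B}$-free systems (as opposed to abstract heredity) is needed: an arbitrary majorant $y$ of $z$ has no a priori reason to be $\nu_\eta$-quasi-generic along $(N_k)$. Here I would invoke the structural results of \cite{Ba-Ka-Ku-Le,MR3356811}: the system $(X_\eta,\nu_\eta,S)$ admits a canonical factor onto a (generalised) odometer $G$ associated with $\mathscr{B}$, $\nu_\eta$ is the pushforward of the Haar measure on $G$ under a distinguished section, and the $G$-projection of $\eta$ equidistributes for this Haar measure along every sequence along which $\eta$ is quasi-generic. Combining this with a diagonal/Cantor-type refinement, I would replace $y$ by a canonical majorant of $z$ (the maximal element of $X_\eta$ lying above $z$ in its odometer fibre), check that its $G$-image is Haar-equidistributed along $(N_k)$, and conclude $\rho_0|_{X_\eta}=\nu_\eta$.

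Finally, ergodic-decompose $\rho_0=\int\rho\,d\lambda(\rho)$. Since both $\mu$ and $\nu_\eta$ are ergodic, the identities $M_\ast\rho_0=\mu$ and $\rho_0|_{X_\eta}=\nu_\eta$ pass to $\lambda$-almost every component: for a.e.\ $\rho$, $M_\ast\rho=\mu$ and $\rho|_{X_\eta}=\nu_\eta$ simultaneously. Any such $\rho$ witnesses $\mu$ as an element of the right-hand side. The delicate step, as indicated, is the first-marginal identification, which is precisely where the arithmetic combinatorics of $\mathscr{B}$-free sets must be used.
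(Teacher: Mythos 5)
This statement is not proved in the paper at all: it is imported verbatim from \cite{Ba-Ka-Ku-Le} (and from \cite{MR3356811} in the coprime, $\sum 1/b<\infty$ case), so there is no internal proof to compare your attempt against. Judged on its own, your proposal has the right overall shape and in fact mirrors the strategy of those references: the inclusion ``$\supseteq$'' is correct and routine (pushforward of an ergodic measure under the equivariant continuous map $M$ is ergodic), the reduction of ``$\subseteq$'' to producing one invariant $\rho_0$ on $X_\eta\times\{0,1\}^\Z$ with $M_\ast\rho_0=\mu$ and $\rho_0|_{X_\eta}=\nu_\eta$ is sound, and the final ergodic-decomposition step (both identities pass to $\lambda$-a.e.\ ergodic component because $\mu$ and $\nu_\eta$ are extreme points) is correct.

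The genuine gap is exactly where you locate it, and your sketch does not close it. First, the equidistribution fact you quote is about $\eta$ itself, whereas what is needed is that the \emph{canonical majorant} of an arbitrary point $z$ generic for an arbitrary $\mu\in\cp^e(S,\widetilde X_\eta)$ is quasi-generic for $\nu_\eta$ along the chosen subsequence $(N_k)$; this follows from unique ergodicity of the rotation on the compact group $G=\prod_{b}\Z/b\Z$ only modulo the fact that the section $G\to\{0,1\}^\Z$ defining $\nu_\eta$ as a pushforward of Haar measure is \emph{not} continuous. Controlling its discontinuity set is precisely where hypotheses on $\mathscr{B}$ (or, for arbitrary $\mathscr{B}\subset\N$ as in the statement, the substantially harder measure-theoretic arguments of \cite{Ba-Ka-Ku-Le}) enter; your plan delegates this back to the very results being proved. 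Second, ``the maximal element of $X_\eta$ lying above $z$ in its odometer fibre'' needs justification: the natural majorant $\varphi(g)$ built from a choice of missed residue classes need not obviously lie in $X_\eta$, and handling this is part of the content of the cited theorem. So what you have is a correct reduction plus a pointer to the literature for the one step that carries all the difficulty, not a proof.
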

We also have:
\begin{Th}[\cite{Ba-Ka-Ku-Le}]\label{entropia}
For any $\mathscr{B}\subset \N$, $h_{top}(S,\widetilde{X}_\eta)=h_{top}(S,X_\mathscr{B})=\ov{d}(\cf_\mathscr{B})$.\footnote{For $A\subset \Z$, $\ov{d}(A)$ stands for the upper density of $A\cap \N$.}
\end{Th}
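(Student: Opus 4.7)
I would establish the chain of inequalities
$$
\ov{d}(\cf_{\mathscr{B}})\;\le\; h_{top}(S,\widetilde{X}_\eta)\;\le\; h_{top}(S,X_{\mathscr{B}})\;\le\; \ov{d}(\cf_{\mathscr{B}})
$$
(treating the logarithm in the definition of entropy as $\log_2$, so that a Bernoulli$(1/2)$ shift has entropy $1$). The middle inequality is immediate from the already-noted inclusion $\widetilde{X}_\eta\subset X_{\mathscr{B}}$.

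\textbf{Lower bound.} For each $n\ge 1$, the block $\eta|_{[1,n]}$ lies in the language of $X_\eta\subset\widetilde{X}_\eta$ and carries exactly $|[1,n]\cap\cf_{\mathscr{B}}|$ ones. By heredity, every $w\in\{0,1\}^n$ with $w\le \eta|_{[1,n]}$ coordinatewise is also in the language of $\widetilde{X}_\eta$: its zero extension $y\in\{0,1\}^\Z$ satisfies $y\le\eta\in X_\eta$, hence $y\in\widetilde{X}_\eta$. Therefore the complexity satisfies $p_n(\widetilde{X}_\eta)\ge 2^{|[1,n]\cap\cf_{\mathscr{B}}|}$, and passing to $\limsup_n n^{-1}\log_2$ yields the first inequality.

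\textbf{Upper bound.} The real work is the third inequality. I would approach it by approximating $\mathscr{B}$ from below by $\mathscr{B}_K:=\mathscr{B}\cap[1,K]$. Since $X_{\mathscr{B}}\subset X_{\mathscr{B}_K}$, one has $h_{top}(X_{\mathscr{B}})\le h_{top}(X_{\mathscr{B}_K})$. For finite $\mathscr{B}_K$ the admissibility constraints are periodic of period $\operatorname{lcm}\mathscr{B}_K$, and viewing $X_{\mathscr{B}_K}$ as the hereditary closure of the periodic orbit of $\raz_{\cf_{\mathscr{B}_K}}$ reduces the computation to a Bernoulli extension over a finite rotation, giving $h_{top}(X_{\mathscr{B}_K})=d(\cf_{\mathscr{B}_K})\log 2$. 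Letting $K\to\infty$, the decreasing sequence $d(\cf_{\mathscr{B}_K})$ has a limit, and a Davenport--Erd\H{o}s style density argument (specific to $\mathscr{B}$-free sets) identifies it with $\ov{d}(\cf_{\mathscr{B}})$.

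\textbf{Main obstacle.} The delicate point is the last identification $\inf_K d(\cf_{\mathscr{B}_K}) = \ov{d}(\cf_{\mathscr{B}})$: for arbitrary sets the corresponding limit is the logarithmic density, which may be strictly smaller than the upper density, so one really uses the $\mathscr{B}$-free structure here. A conceptually cleaner alternative, which I would pursue in parallel, uses \cref{t:bkkl1}: any ergodic $\mu\in\cp^e(S,\widetilde{X}_\eta)$ is of the form $M_*(\rho)$ with $\rho|_{X_\eta}=\nu_\eta$, so by the Abramov--Rokhlin formula (and because $(X_\eta,\nu_\eta)$ has zero entropy) one gets $h_\mu\le \nu_\eta([1])\log 2$, with equality achieved by the independent joining $\rho=\nu_\eta\otimes B_{1/2}$. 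The remaining arithmetic identification $\nu_\eta([1])=\ov{d}(\cf_{\mathscr{B}})$ follows from the quasi-genericity of $\eta$ for $\nu_\eta$ along the density-realizing subsequence $(N_k)$; the only work left is to transfer the bound from $\widetilde{X}_\eta$ to $X_{\mathscr{B}}$, which is where the argument is most delicate.
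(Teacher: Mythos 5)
This theorem is imported verbatim from \cite{Ba-Ka-Ku-Le}; the paper contains no proof of it, so there is no internal argument to compare yours with, and I can only assess the sketch on its own terms. The lower bound and the middle inequality are fine. The genuine gap is in the step $h_{top}(S,X_{\mathscr{B}_K})= d(\cf_{\mathscr{B}_K})$, which you justify by ``viewing $X_{\mathscr{B}_K}$ as the hereditary closure of the periodic orbit of $\raz_{\cf_{\mathscr{B}_K}}$''. That identification is false in general: admissibility only requires that the support of $z$ omit \emph{some} residue class $r_b \bmod b$ for each $b\in\mathscr{B}_K$, with the classes chosen independently, whereas membership in the hereditary closure of the periodic orbit forces all omitted classes to come from a single common shift ($r_b\equiv -k \bmod b$ for one $k$). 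For instance, with $\mathscr{B}_K=\{4,6\}$ the point $\raz_{\Z\setminus(4\Z\cup(1+6\Z))}$ is admissible but does not lie in the hereditary closure of the orbit of $\raz_{\cf_{\{4,6\}}}$. Consequently the count of admissible $n$-blocks is governed by $\max_{(r_b)}\bigl|[1,n]\setminus\bigcup_{b}(r_b+b\Z)\bigr|$, and to close the upper bound you need the combinatorial lemma that the aligned choice $r_b=0$ maximizes the density of the complement, i.e.\ $d\bigl(\Z\setminus\bigcup_b(r_b+b\Z)\bigr)\le d(\cf_{\mathscr{B}_K})$ for every choice of residues. That lemma is the real arithmetic content of the entropy computation in \cite{Ba-Ka-Ku-Le}, and your sketch treats precisely this step as routine while it is not.

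Your other ingredients are correctly identified but do not fill this hole. The identity $\inf_K d(\cf_{\mathscr{B}_K})=\ov{d}(\cf_{\mathscr{B}})$ is indeed the Davenport--Erd\H{o}s theorem applied to the set of multiples (logarithmic density of $\bigcup_b b\Z$ equals its lower density), so the point you flag as the ``main obstacle'' is in fact the classical part. Your alternative route via \cref{t:bkkl1} and the relative entropy bound $h(M_\ast\rho)\le h(\nu_\eta)+\nu_\eta([1])\log 2$, with equality for $\rho=\nu_\eta\otimes B(1/2,1/2)$, is essentially the argument of \cite{MR3356811} and does yield $h_{top}(S,\widetilde{X}_\eta)=\ov{d}(\cf_{\mathscr{B}})$; but, as you concede, \cref{t:bkkl1} describes measures on $\widetilde{X}_\eta$ only, so this says nothing about $X_{\mathscr{B}}$. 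In both variants the equality $h_{top}(S,X_{\mathscr{B}})=\ov{d}(\cf_{\mathscr{B}})$ --- the part of the statement that genuinely exceeds what heredity of $\eta$ and the Mirsky measure give you --- remains unproved.
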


As an immediate consequence of \cref{tmain}, \cref{t:bkkl1} and \cref{entropia}, we obtain:
\begin{Cor}\label{c2}
Let $\mathscr{B}\subset\N$ be such that $\ov{d}(\cf_\mathscr{B})>0$. Then $\mathcal{P}(S,\widetilde{X}_\eta)$ is Poulsen.
\end{Cor}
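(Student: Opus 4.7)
The plan is to recognize that \cref{c2} is a direct corollary obtained by feeding the two $\mathscr{B}$-free results \cref{t:bkkl1} and \cref{entropia} into the general criterion \cref{tmain}. There is no new content to prove; the only work is to check that the hypotheses of \cref{tmain} are satisfied, with $X=X_\eta$ and with $\nu$ taken to be the Mirsky measure $\nu_\eta$.

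First I would verify the joining-type identity~\eqref{joiningtype}. The Mirsky measure $\nu_\eta$ is an ergodic $S$-invariant measure on $X_\eta$ (indeed, $\eta$ is quasi-generic for it, as recorded just before \cref{t:bkkl1}), so it is an admissible candidate for the distinguished measure $\nu$ appearing in~\eqref{joiningtype}. With this choice, \cref{t:bkkl1} is literally the statement that~\eqref{joiningtype} holds for the pair $(X_\eta,\widetilde{X}_\eta)$.

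Next I would check the entropy hypothesis. By \cref{entropia} we have $h_{top}(S,\widetilde{X}_\eta)=\overline{d}(\cf_\mathscr{B})$, and the assumption of the corollary is precisely $\overline{d}(\cf_\mathscr{B})>0$, so $h_{top}(S,\widetilde{X}_\eta)>0$. Both hypotheses of \cref{tmain} are therefore in place, and \cref{tmain} immediately yields that $\mathcal{P}(S,\widetilde{X}_\eta)$ is Poulsen.

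There is no substantive obstacle to overcome at this stage: all of the hard work is encapsulated in \cref{tmain} (the general Poulsen criterion for hereditary systems) and in the two cited theorems from~\cite{Ba-Ka-Ku-Le}, which together ensure that the $\mathscr{B}$-free setting fits the hypotheses of \cref{tmain} whenever the $\mathscr{B}$-free set has positive upper density. The only minor point worth mentioning is the ergodicity of the Mirsky measure, which is needed so that $\nu_\eta$ can play the role of $\nu\in\mathcal{P}^e(S,X)$ in~\eqref{joiningtype}; this is standard and is implicit in the statement of \cref{t:bkkl1}.
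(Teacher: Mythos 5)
Your proposal is correct and follows exactly the paper's route: the paper derives \cref{c2} as an immediate consequence of \cref{tmain}, \cref{t:bkkl1} (which supplies condition~\eqref{joiningtype} with $\nu=\nu_\eta$) and \cref{entropia} (which gives $h_{top}(S,\widetilde{X}_\eta)=\ov{d}(\cf_\mathscr{B})>0$). Nothing is missing.
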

In many cases, the $\mathscr{B}$-free system $(S,X_\eta)$ itself is hereditary, i.e.\ $\widetilde{X}_\eta=X_\eta$. In particular, using results from \cite{Ba-Ka-Ku-Le}, we obtain the following:
\begin{Cor}\label{c:main1}
Let $\mathscr{B}\subset\N$ be such that $\ov{d}(\cf_{\mathscr{B}})>0$, with $\ov{d}(\bigcup_{b\geq K}b\Z)\to 0$ when $K\to\infty$. Assume moreover that $\mathscr{B}$ contains an infinite pairwise coprime subset. Then $(S,X_\eta)$ is hereditary and the simplex $\cp(S,X_\eta)$ is Poulsen.
\end{Cor}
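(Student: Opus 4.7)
The plan is to split the corollary into its two assertions: first that $X_\eta$ is hereditary, i.e.\ $\widetilde{X}_\eta = X_\eta$, and second that the simplex $\cp(S,X_\eta)$ is Poulsen; the second part then follows immediately from \cref{c2}.

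For the heredity, I would start from the observation that $X_\mathscr{B}$ is manifestly hereditary: if $z\in X_\mathscr{B}$ and $w\leq z$ coordinatewise, then $\{n:w(n)=1\}\subset\{n:z(n)=1\}$, so the admissibility inequality $|\{n:w(n)=1\}\bmod b|<b$ is inherited for every $b\in\mathscr{B}$. Since $X_\eta\subset\widetilde{X}_\eta\subset X_\mathscr{B}$ by construction, proving heredity of $X_\eta$ is equivalent to proving the reverse inclusion $X_\mathscr{B}\subset X_\eta$. This is the place where both assumptions on $\mathscr{B}$ enter. I would invoke the relevant result of \cite{Ba-Ka-Ku-Le}: under the light condition $\ov d(\bigcup_{b\geq K}b\Z)\to 0$ and the existence of an infinite pairwise coprime subset of $\mathscr{B}$, one has $X_\eta=X_\mathscr{B}$. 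Heuristically, the light condition lets one approximate an arbitrary admissible $z\in X_\mathscr{B}$ on a long window by truncating $\mathscr{B}$ to a finite part $\mathscr{B}\cap[1,K]$, while the infinite pairwise coprime subset provides, via the Chinese Remainder Theorem, an integer $n$ along which all relevant coprime multiples are avoided and the pattern of $\eta$ on $[n-N,n+N]$ matches the prescribed pattern of $z$.

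Once $\widetilde{X}_\eta = X_\eta$ is in hand, I would close the argument as follows. The hypothesis $\ov d(\cf_\mathscr{B})>0$ combined with \cref{entropia} gives $h_{top}(S,\widetilde{X}_\eta)>0$, so the Poulsen conclusion of \cref{c2} applies to $\cp(S,\widetilde{X}_\eta)=\cp(S,X_\eta)$.

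The main obstacle, as I see it, is the heredity step: locating and applying the precise result from \cite{Ba-Ka-Ku-Le} that yields $X_\eta=X_\mathscr{B}$ under these two combined assumptions. The classical Erdős regime (pairwise coprime $\mathscr{B}$ with $\sum_{b\in\mathscr{B}}1/b<\infty$) is the easy prototype, but the hypotheses here are genuinely broader, accommodating examples such as squares of primes where the Erdős sum condition fails yet heredity still holds thanks to lightness plus a coprime sub-family. Once that lookup is done, both the heredity step and the passage to Poulsenness are formal; no new dynamical input beyond \cref{c2} and \cref{entropia} is needed.
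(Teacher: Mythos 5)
Your proposal matches the paper's (essentially citation-level) argument: heredity is obtained from the results of \cite{Ba-Ka-Ku-Le} showing $X_\eta=X_{\mathscr{B}}$ under the light-tails condition together with an infinite pairwise coprime subset, and the Poulsen property then follows from \cref{c2} since $\ov{d}(\cf_{\mathscr{B}})>0$. One small imprecision: heredity of $X_\eta$ is \emph{implied by}, but not logically equivalent to, the inclusion $X_{\mathscr{B}}\subset X_\eta$; since you prove the sufficient direction, this does not affect the argument.
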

In particular, $\cp(S,X_\eta)$ is Poulsen in the following classical cases:
\begin{itemize}
\item when $\mathscr{B}$ is infinite, pairwise coprime and $\sum_{b\in \mathscr{B}}1/b<\infty$; for example, the result holds for the square-free subshift given by $\mathscr{B}=\{p^2:p\text{ is prime}\}$;
\item when $\mathscr{B}=\mathscr{B}_{\A}$, where  $\mathscr{B}_{\A}$ stands for the set of primitive abundant numbers~\cite{MR1574879}.
\end{itemize}

Finally, we give an example of a hereditary system of positive entropy whose simplex of invariant measures fails to be Poulsen.

\begin{Remark}
For $\mathscr{B}$ pairwise coprime with $\sum_{b\in \mathscr{B}}1/b<\infty$ another proof of~\cref{c2} has been presented in~\cite{ko-kwi}. This proof is using a different method than ours.
\end{Remark}

\begin{Remark}
Our original motivation for \cref{tmain} was to study the simplex of invariant measures for $\mathscr{B}$-free systems, where the Mirsky measure $\nu_\eta$ that plays the role of $\nu$ in condition~\eqref{joiningtype} is of zero entropy. For this reason, we include a complete proof of~\cref{tmain} under the extra assumption that $\nu$ has zero entropy and then explain the necessary changes to obtain the full version of our result.
\end{Remark}

\begin{Remark}
While the name \emph{Poulsen simplex} comes from~\cite{MR0123903}, where a simplex with a dense set of extreme points was constructed, this is historically not the first such example. The most basic dynamical system with the simplex of invariant measures being Poulsen is the full shift. For the 2-shift, the fact that any invariant measure can be approximated by measures concentrated on periodic orbits (such measures are of course ergodic) follows from~\cite{Ville1939} (see the proof of Theorem 3 therein, in particular, the comments on page 13). For the full shift over any Polish space, the fact that ergodic measures are dense in the space of all invariant measures was proved by Parthasarathy~\cite{MR0148850}. There is a further discussion of this with a proof that measures concentrated on periodic orbits are dense in the paper of Oxtoby \cite{MR0160875}. Moreover, Sigmund~\cite{MR0286135, MR0352411} gave a condition on $(T,X)$ (so-called periodic specification property) that implies that $\cp(T,X)$ is Poulsen. His results were applied in many situations, e.g., in~\cite{MR0457675,MR1765882,MR718829,MR1407484}. See~\cite{Gelfert:2014aa} for more details.
\end{Remark}

\section{Proof of \cref{tmain} for $\nu$ of zero entropy}\label{se2}

\begin{Lemma}\label{l:bm1}
Assume that $(X_i,\cb_i,\mu_i)$, $i=1,2$, are standard probability Borel spaces with automorphisms $T_i$. Assume that $A\subset X_1\times X_2$ is a Borel set and let $\rho\in J(T_1,T_2)$.\footnote{$\rho\in J(T_1,T_2)$ is a {\em joining} of $T_1$ and $T_2$, i.e.\ $\rho\in \mathcal{P}(T_1\times T_2,X_1\times X_2)$ and has projections $\mu_1$ and $\mu_2$ respectively.} Let $\pi_{X_1}\colon X_1\times X_2\to X_1$ stand for the projection onto the first coordinate. Then $\mu_1(\pi_{X_1}(A))\geq \rho(A)$.
\end{Lemma}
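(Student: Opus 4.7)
The proof is essentially a one-line bookkeeping argument, so my plan is to unpack the definitions and then worry about the small measurability subtlety.

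The plan is as follows. The key observation is the set-theoretic inclusion
\[
A \;\subseteq\; \pi_{X_1}(A) \times X_2,
\]
which is immediate: any point $(x_1,x_2)\in A$ has $x_1\in\pi_{X_1}(A)$ by definition of projection, and $x_2\in X_2$ trivially. I would then apply monotonicity of $\rho$ to get
\[
\rho(A) \;\leq\; \rho\bigl(\pi_{X_1}(A)\times X_2\bigr),
\]
and use the fact that $\rho\in J(T_1,T_2)$ has $\mu_1$ as its marginal on the first coordinate, so that the right-hand side equals $\mu_1(\pi_{X_1}(A))$, yielding the claim.

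The only point requiring care is that $\pi_{X_1}(A)$ need not itself be Borel when $A$ is merely Borel. I would handle this by noting that since $X_1,X_2$ are standard Borel spaces and the projection $\pi_{X_1}$ is continuous, $\pi_{X_1}(A)$ is an analytic subset of $X_1$, and analytic sets are universally measurable. Hence $\mu_1(\pi_{X_1}(A))$ is well-defined with respect to the completion of $\mu_1$, and $\rho(\pi_{X_1}(A)\times X_2)$ is well-defined with respect to the completion of $\rho$; moreover, the marginal identity $\rho(B\times X_2)=\mu_1(B)$ extends from Borel to universally measurable $B$ by the standard approximation (sandwich $\pi_{X_1}(A)$ between Borel sets of equal $\mu_1$-measure). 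An equally valid alternative, if one prefers to avoid descriptive set theory, is to interpret $\mu_1(\pi_{X_1}(A))$ as outer measure, in which case the inclusion above together with the marginal identity for Borel supersets of $\pi_{X_1}(A)$ gives the inequality directly.

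There is no real obstacle here: the content of the lemma is only the inclusion $A\subseteq\pi_{X_1}(A)\times X_2$ combined with the marginal property of a joining, and the only thing one has to be slightly careful about is interpreting $\mu_1(\pi_{X_1}(A))$ correctly given that projections of Borel sets are generally only analytic.
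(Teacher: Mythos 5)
Your proposal is correct and follows essentially the same route as the paper: the inclusion $A\subseteq\pi_{X_1}(A)\times X_2$, monotonicity of $\rho$, and the marginal property of the joining, with the measurability of the projection handled via the fact that projections of Borel sets in standard Borel spaces are analytic and hence universally measurable (the paper just states that $\pi_{X_1}(A)$ is $\mu_1$-measurable). Your extra care about extending the marginal identity to universally measurable sets is a welcome elaboration but not a departure from the paper's argument.
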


\begin{proof} Since $\pi_{X_1}$ is Borel measurable and $A$ is Borel, the set $\pi_{X_1}(A)$ is $\mu_1$-measurable. Moreover, $
A\subset \pi_{X_1}(A)\times X_2$ and the result follows.
\end{proof}

Given a subshift $Y\subset\{0,1\}^{\Z}$, let $\cl(Y)$ stand for the family of all blocks appearing in a $y\in Y$.

\begin{Lemma}\label{l:bm2}
Given $n\geq1$, $\delta>0$, $\mu\in\cp^e(S,\widetilde{X})$, and $A_n\subset\{0,1\}^n$ with $\mu(A_n)>1-\delta$,
let $$\cc_n:=\{u\in\cl(X):|u|=n\text{ and }u\geq w\text{ for some }w\in A_n\}.$$
Then $\nu(\cc_n)>1-\delta$, where $\nu$ is as in~\eqref{joiningtype}.
%Then, for the Mirsky measure $\nu_\eta$ on $X_\eta$, we have $\nu_\eta(\cc_n)>1-\delta$.
\end{Lemma}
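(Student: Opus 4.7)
The plan is to use the joining representation \eqref{joiningtype} to pull the block-statistics of $\mu$ back to the first coordinate $X$, exploiting the obvious fact that coordinatewise multiplication only decreases a sequence. The main work will be a single bookkeeping step plus an application of \cref{l:bm1}.

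Concretely, I would first use \eqref{joiningtype} to fix $\rho\in\cp^e(S\times S, X\times\{0,1\}^{\Z})$ with $\rho|_X=\nu$ and $M_\ast\rho=\mu$. Writing $[A_n]\subset\widetilde X$ for the cylinder determined by the initial block lying in $A_n$, and $B:=M^{-1}([A_n])\subset X\times\{0,1\}^\Z$, one has
\[
\rho(B)=M_\ast\rho([A_n])=\mu(A_n)>1-\delta.
\]

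The next step is the key (but easy) geometric observation. For every $(x,y)\in B$, the block $w:=M(x,y)(0)\dots M(x,y)(n-1)$ belongs to $A_n$, and since $M(x,y)(k)=x(k)y(k)\le x(k)$, the block $u:=x(0)\dots x(n-1)$ satisfies $u\ge w$ coordinatewise; as $x\in X$, this block lies in $\cl(X)$, so $u\in\cc_n$. Consequently $\pi_X(B)$ is contained in the cylinder set $[\cc_n]:=\{x\in X:x(0)\dots x(n-1)\in\cc_n\}$, whose $\nu$-measure is exactly $\nu(\cc_n)$ in the notation of the lemma.

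Finally, I would apply \cref{l:bm1} to the joining $\rho$ and the Borel set $B$, obtaining $\nu(\pi_X(B))\ge\rho(B)>1-\delta$, and combine with the inclusion $\pi_X(B)\subset[\cc_n]$ to conclude $\nu(\cc_n)>1-\delta$. The only point requiring any care is checking that $\pi_X(B)$ is $\nu$-measurable so that \cref{l:bm1} applies verbatim; this is automatic because $M$ and hence $B$ is Borel, and the lemma has already been stated in exactly this generality. There is no genuine obstacle — the proof is essentially a two-line consequence of the joining hypothesis together with the monotonicity $M(x,y)\le x$.
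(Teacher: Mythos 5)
Your proof is correct and follows the same route as the paper: fix $\rho$ from \eqref{joiningtype} with $M_\ast\rho=\mu$ and $\rho|_X=\nu$, note that $\pi_X(M^{-1}(A_n))\subset\cc_n$ because $M(x,y)\le x$ coordinatewise, and apply \cref{l:bm1}. You have merely spelled out the inclusion and the measurability point that the paper leaves as ``not hard to see.''
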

\begin{proof}
Let $\rho\in \mathcal{P}(S\times S, X\times \{0,1\}^\Z)$ be such that $M_\ast(\rho)=\mu$ and $\rho|_X=\nu$. It is not hard to see $C_n\supset \pi_{X}(M^{-1}(A_n))$. The result follows from \cref{l:bm1}.
\end{proof}

Fix $\nu_1,\nu_2\in \cp^e(S,\widetilde{X})$. All we need to show is that the measure $\frac12(\nu_1+\nu_2)$ can be approximated by ergodic measures.\footnote{It is enough to show that  the closure of the
ergodic measures is a convex set and for this it
suffices to verify the midpoint condition for ergodic measures.}
By the definition of weak topology on measures, it follows that, given $k_0\geq1,\vep_0>0$, we need
to find $\ov{\eta}\in \widetilde{X}$ such that:
\begin{equation}\label{b1}
\parbox{0.8\textwidth}{
$\ov{\eta}$ is generic for an ergodic measure,
}
\end{equation}
\begin{equation}\label{b2}
\parbox{0.8\textwidth}{
the empirical distribution of $k_0$-blocks on $\ov{\eta}$ is, up to $\vep_0>0$, equal to $\frac12(\nu_1|_{\{0,1\}^{k_0}}+\nu_2|_{\{0,1\}^{k_0}})$.
}
\end{equation}

Fix $k_0\geq1$ and $\vep_0>0$. Fix also $\vep>0$ much smaller than $\vep_0$.
Using the ergodic theorem for $\nu_1$ and $\nu_2$ respectively, we can find $n_0\geq k_0$, $\cf_i\subset\{0,1\}^{n_0}$, $\nu_i(\cf_i)>1-\vep/2$,  $i=1,2$ such that
\begin{equation}\label{b1a}
\parbox{0.8\textwidth}{
the empirical $k_0$-distribution in any $w\in\cf_i$ is $\vep$-close to $\nu_i|_{\{0,1\}^{k_0}}$.
}
\end{equation}
Let
\begin{equation}\label{b12}
\cg_{n_0}:=\{u\in\cl(X_\eta): |u|=n_0\text{ and }u\geq w_i\text{ for some } w_i\in \cf_i, i=1,2\}.
\end{equation}
Note that, for $i=1,2$, we have
$$
u\in \cl(X) \text{ and } u\geq w_i\text{ for some } w_i\in\cf_i\iff u\in \pi_X(M^{-1}(\cf_i)),
$$
i.e.\
\begin{equation}\label{w2}
\cg_{n_0}=\pi_X(M^{-1}(\cf_1)) \cap \pi_X(M^{-1}(\cf_2)).
\end{equation}
Therefore, applying \cref{l:bm2} to $n=n_0$ and $A_n=\cf_i$, $i=1,2$, we obtain
$$
\nu(\cg_{n_0})>1-\vep.
$$
Based on~\eqref{b12}, we define two maps $R_i\colon\cg_{n_0}\to\{0,1\}^{n_0}$, so that
\begin{equation}\label{b112}
R_i(u)\in \cf_i,\;i=1,2.
\end{equation}

Consider now the Markov chain with the states $\{1,\ldots,n_0,n_0+1\}$ and the transition probabilities, i.e.\ the stochastic matrix $P'=(p'_{ij})$, given by:
$$
p'_{i,i+1}=1\text{ for  }i=1,\ldots,n_0-1,\ p'_{n_0,n_0+1}=1/2,\ p'_{n_0,1}=1/2,\ p'_{n_0+1,1}=1.$$
Let $p'=(p'_1,\ldots,p'_{n_0},p'_{n_0+1})$ be the probabilistic vector in which
$$
p'_i=\frac1{n_0+1/2}\text{ for }i=1,\ldots,n_0\text{ and }p'_{n_0+1}=\frac1{2(n_0+1/2)}.$$
We have $p'\cdot P'=p'$ whence the formula
$$
\kappa'(a_0,a_1,\ldots,a_m)=p'_{a_0}p'_{a_0,a_1}\ldots p'_{a_{m-1},a_m}$$
for $a_k\in\{1,\ldots,n_0,n_0+1\}$ yields an $S$-invariant (Markov) measure $\kappa'$ on $W':=\{1,\ldots,n_0,n_0+1\}^{\Z}$.
It is not hard to see that $(P')^{n_0+1}$ has all entries positive, that is, $P'$ is aperiodic, and therefore the Markov measure $\kappa'$ yields a mixing Markov shift. 

Let $\eta_\nu\in \{0,1\}^\Z$ be a generic point for $\nu$.
\begin{Lemma}\label{l:bm4} If $z'\in\{0,1\}^{\Z}$ is a generic point for $\kappa'$, then $(\eta_\nu,z')$ is a generic point for the product measure $\nu\ot\kappa'$ with the latter measure being ergodic.
\end{Lemma}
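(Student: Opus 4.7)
The plan is to split the lemma into its two assertions and handle each with a classical ingredient. First I would verify that $\nu\otimes\kappa'$ is ergodic: the authors have already observed that $P'$ is aperiodic and irreducible, so $\kappa'$ is a mixing Markov measure, hence weakly mixing. Combined with the ergodicity of $\nu$, the standard fact that the product of an ergodic system with a weakly mixing one is ergodic yields the ergodicity of $\nu\otimes\kappa'$.

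For the genericity of $(\eta_\nu,z')$, I would pass to the empirical measures
\[
\mu_N:=\frac1N\sum_{n=0}^{N-1}\delta_{(S^n\eta_\nu,\,S^nz')}
\]
on the product space. By weak-$*$ compactness this sequence has accumulation points; any such accumulation point $\lambda$ is $S\times S$-invariant. Because $\eta_\nu$ is generic for $\nu$ and $z'$ is generic for $\kappa'$, the pushforwards of $\mu_N$ under the two coordinate projections converge to $\nu$ and $\kappa'$ respectively. Hence every accumulation point $\lambda$ lies in the set $J(\nu,\kappa')$ of joinings of $\nu$ and $\kappa'$.

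The heart of the argument is then to show $J(\nu,\kappa')=\{\nu\otimes\kappa'\}$, which forces $\mu_N\to\nu\otimes\kappa'$ and gives genericity. Here I would invoke Furstenberg's disjointness theorem: $\nu$ is assumed to have zero entropy in this section, while $\kappa'$, being the measure of an aperiodic irreducible Markov shift on the finite alphabet $\{1,\dots,n_0+1\}$, has completely positive entropy, i.e.\ is a K-automorphism. Zero-entropy systems are disjoint from K-systems, so the only joining is the product.

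I expect the main obstacle to be purely bookkeeping — ensuring that the marginals of weak-$*$ limits really do coincide with $\nu$ and $\kappa'$, which is immediate from genericity of each coordinate — together with invoking the disjointness theorem cleanly. No delicate estimates are needed; once the two classical inputs (product with a weakly mixing system preserves ergodicity, and zero entropy is disjoint from K) are identified, the statement follows from a standard compactness argument.
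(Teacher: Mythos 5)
Your proposal is correct and follows essentially the same route as the paper: the authors' proof is precisely the one-line observation that a mixing Markov shift is a K-system and that zero-entropy systems are disjoint from K-systems, from which genericity of the pair follows by the standard empirical-measure/joinings argument you spell out. The only addition is your separate verification of ergodicity of $\nu\ot\kappa'$ via the ergodic-times-weakly-mixing fact, which is a harmless elaboration of what the paper leaves implicit.
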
 \begin{proof}Each mixing Markov subshift is a K-system, so the result follows directly from disjointness of zero entropy systems with K-systems, see, e.g.,~\cite{MR1958753}.\end{proof}

Notice that $z'$ above consists of consecutive blocks
$$(1,2,\ldots,n_0)\text{ or }(1,2,\ldots,n_0,n_0+1).$$ Moreover, by \cref{l:bm4},
\begin{equation}\label{b14}
(1-\vep)\kappa'([1])\leq\nu\ot\kappa'(\cg_{n_0}\times [1])=\lim_{N\to\infty}\frac1N\sum_{s\leq N}\raz_{\cg_{n_0}\times[1]}((S\times S)^s(\eta_\nu,z')).
\end{equation}

\begin{Remark} The natural representation of $z'$ as a concatenation of blocks of length $n_0$ and $n_0+1$ induces the corresponding representation of $\eta_\nu$ as concatenation of block of the same lengths.   An interpretation of~\eqref{b14} is that when we look at the concatenation of $\eta_\nu$ as $n_0$- and $(n_0+1)$-blocks, then for ``most'' of  the blocks, we see that the either the $n_0$-block belongs to $\cg_{n_0}$ or the beginning $n_0$-block in case of length $n_0+1$, belongs to $\cg_{n_0}$. Note also that we cannot simplify this argument by representing $\eta_\nu$ as concatenation only of $n_0$-blocks. Indeed, we do not  know whether $(S^{n_0},X,\nu$) is ergodic, hence we cannot be sure that for most of the blocks in such a concatenation, we are in $\cg_{n_0}$. For example, if we consider the square-free case, $(S^{m},X_\eta,\nu_{\eta})$ is not ergodic for any $m\geq2$ as the spectrum contains $e^{2\pi i/p^2}$, $p\in \mathscr{P}$, hence the roots of all prime degrees.\end{Remark}

Remembering that due to the natural representation of $z$ as concatenation of $n_0$- and $(n_0+1)$-blocks, the sequence $\eta_\nu$ is represented as a concatenation of blocks of length $n_0$ or $n_0+1$, let us now define a new sequence $\eta'=\eta'(\eta_\nu,z)\in\{\ast,0\}^{\Z}$ in the following way:
\begin{enumerate}[(a)]
\item
if in the above concatenation the block in $\eta_\nu$ is of length $n_0$ and belongs to $\cg_{n_0}$, we replace it by the all $\ast$ $n_0$-block;
\item
if in the above concatenation the block in $\eta_\nu$ is of length $n_0+1$ and the starting $n_0$-block is in $\cg_{n_0}$, we replace it by the all $\ast$ $n_0$-block adding 0 at the end to obtain a block of length $n_0+1$;
\item
if in the above concatenation the block in $\eta_\nu$ is of length $n_0$ and does not belong to $\cg_{n_0}$ or it is of length $n_0+1$ but the starting $n_0$-block neither belongs to $\cg_{n_0}$, we replace it by the all $0$ $n_0$- or $(n_0+1)$-block.
\end{enumerate}

\begin{Lemma}\label{l:b16}
The sequence $\eta'\in\{\ast,0\}^{\Z}$ is generic for an ergodic measure.
\end{Lemma}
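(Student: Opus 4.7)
The plan is to exhibit $\eta'$ as the image of the generic point $(\eta_\nu,z')$ under a continuous shift-equivariant map, and then invoke \cref{l:bm4} together with the fact that a continuous factor image of a point generic for an ergodic measure is itself generic for an ergodic measure.

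First, I would observe that every $z\in\mathrm{supp}(\kappa')$ admits a canonical decomposition into consecutive blocks of the form $(1,2,\ldots,n_0)$ or $(1,2,\ldots,n_0,n_0+1)$: the transition matrix $P'$ forces the state sequence to cycle through $1,2,\ldots,n_0$ and then either return to $1$ directly or pass through $n_0+1$ first. In particular, for each site $s$, the value $z(s)$ together with a window of $z$ of length at most $n_0+2$ around $s$ unambiguously identifies the block containing $s$ and its starting position.

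With that in hand, I would define a sliding block code $\pi\colon\{0,1\}^{\Z}\times W'\to\{\ast,0\}^{\Z}$ of window size of order $2(n_0+1)$ which, at site $s$, uses $z$ to locate the ambient block, reads the corresponding $n_0$-block of the first coordinate to test membership in $\cg_{n_0}$, and outputs a symbol from $\{\ast,0\}$ according to rules (a)--(c). By construction $\pi(\eta_\nu,z')=\eta'$, and $\pi$ is continuous and intertwines $S\times S$ with $S$. Since by \cref{l:bm4} the pair $(\eta_\nu,z')$ is generic for the ergodic measure $\nu\ot\kappa'$, the image $\eta'=\pi(\eta_\nu,z')$ is generic for $\pi_{\ast}(\nu\ot\kappa')$, and the latter is ergodic as a continuous shift-equivariant factor of an ergodic measure.

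The main obstacle is the bookkeeping required to set up the sliding block code precisely and to verify that the block decomposition of $z$ is both canonical and recoverable from a finite window around every site; once that is in place, the rest is a routine invocation of the fact that continuous shift-equivariant factors respect both genericity and ergodicity.
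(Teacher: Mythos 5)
Your proposal is correct and follows essentially the same route as the paper: the paper's proof also exhibits $\eta'$ as the image of the generic point $(\eta_\nu,z')$ under a finite sliding block code (locating the ambient block via the nearest symbol $1$ to the left in $z'$, within at most $n_0+1$ positions, then testing membership in $\cg_{n_0}$), and then relies on \cref{l:bm4} together with the fact that continuous equivariant images of generic points for ergodic measures are generic for the (ergodic) pushforward. The only difference is cosmetic bookkeeping in the window size of the code.
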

\begin{proof} We simply show that $\eta'$ is obtained from $(\eta_\nu,z)$ by a (finite) code.\footnote{Formally, we should define a block map. This can be done for example as follows. Consider all blocks (over the double alphabet) of length $4n_0+1$ that appear in $(\eta_\nu,z')$. Due to the special form of $z'$, there will be at least 3 symbols 1 on the second coordinates of this block. Look for the first 1 on the left of the middle of the block. This gives a certain position $j$. Look at the block on $\eta_\nu$ of length $n_0$ (or $n_0+1$ if , as the symbol(!), $n_0+1$ appears (on the second coordinate) on the right to the middle position before 1 reappears), read whether the corresponding block on $\eta_\nu$ belongs or does not to $\cg_{n_0}$ and code the whole $4n_0+1$-block (over the double alphabet) by $\ast$ or $0$, respectively.} Indeed, if we want to determine $\eta'(i)$ (i.e. $\ast$ or 0), we first look at $z(i)$ and seek the first symbol 1 on the left (we do not check more than $n_0+1$ positions); we determined a position $j$ in this way, and we check now whether $\eta_\nu(j,j+n_0-1)$ does or does not belong to $\cg_{n_0}$. Now the $\eta'(i)$ is determined by (a)-(c).\end{proof}

Consider now the following Markov shift: the set of states consists of $$\{1,\ldots,n_0,n_0+1\}\text{ and its disjoint copy } \{\ov{1},\ldots,\ov{n_0},\ov{n_0+1}\}.$$ Let
$$
p=(p_1,\ldots,p_{n_0},p_{n_0+1}, p_{\ov{1}},\ldots,p_{\ov{n_0}},p_{\ov{n_0+1}})$$
be the probabilistic vector for which $p_i=p_{\ov{i}}=\frac{1}{2n_0+1}$ for $i=1,\ldots, n_0$ and $p_{n_0+1}=p_{\ov{n_0+1}}=\frac1{2(2n_0+1)}$.
The matrix $P$ of transition probabilities is given by the following:
$$
p_{i,i+1}=p_{\ov{i},\ov{i+1}}=1\text{ for  }i=1,\ldots,n_0-1,$$
$$p_{n_0,n_0+1}=p_{\ov{n_0},\ov{n_0+1}}=1/2,$$
$$
p_{n_0,1}=p_{\ov{n_0},\ov{1}}=p_{n_0,\ov{1}}=p_{\ov{n_0},1}=1/4$$
and
$$ p_{n_0+1,1}=p_{n_0+1,\ov{1}}=
p_{\ov{n_0+1},1}=p_{\ov{n_0+1},\ov{1}}=1/2.$$
We have $p\cdot P=p$ whence the formula
$$
\kappa(a_0,a_1,\ldots,a_m)=p_{a_0}p_{a_0,a_1}\ldots p_{a_{m-1},a_m}$$
for $a_k\in\{i,\ov{i}: i=1,2,\ldots,n_0\}$ yields an $S$-invariant (Markov) measure $\kappa$ on $W:=\{1,\ldots,n_0+1,\ov{1},\ldots,\ov{n_0+1}\}^{\Z}$.
It is not hard to see that $P^{2(n_0+1)}$ has all entries positive, that is, $P$ is aperiodic, and therefore the Markov measure $\kappa$ yields a mixing Markov shift. 

Fix $z$ a generic point for $\kappa$. Then $z$ is a concatenation of 4 types of blocks (of length either $n_0$ or $n_{0}+1$): $(1,\ldots,n_0)$, $(1,\ldots,n_0,n_0+1)$, $(\ov{1},\ldots,\ov{n_0})$ and $(\ov{1},\ldots,\ov{n_0},\ov{n_0+1})$. Moreover, on a sufficiently long initial part of $z$, the non-barred-blocks and the barred-blocks are equally probable (as $\kappa(1)=\kappa(\ov{1})$). Similarly, to \cref{l:b16}, we obtain the following.

\begin{Lemma}\label{l:bm6} If $z\in\{0,1\}^{\Z}$ is a generic point for $\kappa$, then $(\eta_\nu,z)$ is a generic point for the product measure $\nu\ot\kappa$ with the latter measure being ergodic.
\end{Lemma}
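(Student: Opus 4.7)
The plan is to repeat, essentially verbatim, the argument used for \cref{l:bm4}, only with the new alphabet $W$ and the new Markov measure $\kappa$ in place of $W'$ and $\kappa'$. The structure of the proof is the same: mixing Markov $\Rightarrow$ K-system $\Rightarrow$ disjoint from zero entropy systems $\Rightarrow$ unique joining with $\nu$ is the product $\Rightarrow$ quasi-genericity upgrades to genericity.

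More concretely, I would first invoke what has already been observed in the paragraph preceding the lemma, namely that $P^{2(n_0+1)}$ has strictly positive entries, so $\kappa$ is a mixing Markov measure; mixing Markov shifts are K-automorphisms. Since in this section we are working under the standing assumption that $\nu$ has zero entropy, Furstenberg's disjointness theorem (zero entropy systems are disjoint from K-systems; see e.g.\ the reference \cite{MR1958753} already used for \cref{l:bm4}) tells us that the only element of $J(S|_{\text{supp}(\nu)},S|_W)$ is the product joining $\nu\otimes\kappa$. In particular $\nu\otimes\kappa$ is ergodic, since it is a joining of an ergodic measure with a weakly mixing one.

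Next I would pass from ``unique joining'' to ``generic point''. Because $\eta_\nu$ is generic for $\nu$ and $z$ is generic for $\kappa$, any weak-$*$ limit $\rho$ of the empirical measures
\[
\frac1N\sum_{n\leq N}\delta_{(S\times S)^n(\eta_\nu,z)}
\]
(existing along subsequences by compactness) projects to $\nu$ on the first coordinate and to $\kappa$ on the second, hence is a joining. By the disjointness conclusion above, every such $\rho$ equals $\nu\otimes\kappa$, so in fact the limit exists along the full sequence and $(\eta_\nu,z)$ is generic for $\nu\otimes\kappa$.

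There is no real obstacle here: the only thing one must be slightly careful about is that $W$ now carries four kinds of ``block states'' rather than two, but this plays no role in the disjointness argument, as the argument depends only on the entropy-theoretic properties of $\kappa$ (mixing Markov $\Rightarrow$ K) and of $\nu$ (zero entropy). If anything, the point worth flagging is that we are implicitly using the full strength of the running ``zero entropy of $\nu$'' assumption of \cref{se2}, which is exactly the hypothesis that makes Furstenberg disjointness applicable; the more general case of positive entropy $\nu$ would require the modifications promised in the earlier remark.
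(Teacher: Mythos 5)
Your argument is exactly the paper's: the authors prove \cref{l:bm4} by noting that a mixing Markov shift is a K-system and invoking disjointness of zero entropy systems from K-systems, and they state that \cref{l:bm6} follows in the same way, which is precisely what you carry out (including the correct observation that aperiodicity of $P$ gives the mixing Markov property and that the zero-entropy standing assumption on $\nu$ is what makes the disjointness applicable). The passage from unique joining to genericity via weak-$*$ limits of empirical measures is the standard step implicit in the paper, so your write-up is correct and essentially identical in approach.
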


Moreover, the map  which to $i$ and $\ov{i}$ associates $i$, for $i=1,\ldots,n_0,n_0+1$ yields a continuous factor map $\Lambda$ between
$(S,W,\kappa)$ and $(S,W',\kappa')$; in particular, if $z$ is generic for $\kappa$ then $z':=\Lambda(z)$ is generic for $\kappa'$.

We now fix $z$ a generic point for $\kappa$ and  repeat the construction of $\eta'\in\{0,1\}^{\Z}$ for $z'$. Then, we transform $\eta'$ into $\ov{\eta}$ by replacing the $n_0$-$\ast$-block $w$ by either $R_1(w)$ or $R_2(w)$ (cf.~\eqref{b112}) depending on the fact whether on $z$ we considered the non-barred- or the barred-block. Up to density $\epsilon_1>0$, we have now replaced half of the $\ast$-blocks by blocks from $\cf_1$ and the second half by blocks from $\cf_2$. Clearly, $\ov{\eta}\leq\eta_\nu$. Moreover, by~\eqref{b1a}, the empirical distribution of $k_0$-blocks on $\ov{\eta}$ is $\vep$-close to the distribution of $k_0$-blocks for the measure $\frac12(\nu_1+\nu_2)$. Finally, following the proof of \cref{l:b16}, we obtain that $\ov{\eta}$ is generic for an ergodic measure (it is obtained by a finite code from $(\eta_\nu,z)$). We have proved~\eqref{b1} and~\eqref{b2}, so \cref{tmain} follows.

\section{Sketch of the proof of \cref{tmain} for general $\nu$}

We explain now how to modify the proof from \cref{se2} after dropping the additional assumption that $\nu$ has zero entropy. Instead of the Markov shift $(S,W,\kappa)$, we consider an arbitrary ergodic aperiodic automorphism $(T,{W},\kappa)$ disjoint from $(S,X,\nu)$ (the existence of such an automorphism follows from \cite{MR633762}, in fact, a generic automorphism is disjoint from $(S,X,\nu)$). By the Alpern's Lemma~\cite{MR661814}, $W$ can be decomposed into two towers, say, $\mathcal{W}_0$ and $\mathcal{W}_1$, of height $n_0$ and $n_0+1$, respectively. We split each of them into two further towers of equal measure and of the same height as the original tower: $\mathcal{W}_0=\mathcal{W}_0^L\cup\mathcal{W}_0^R$, $\mathcal{W}_1=\mathcal{W}_1^L\cup\mathcal{W}_1^R$. By assigning symbols $1,\dots, n_0$ and $1,\dots, n_0+1$ to the consecutive levels of $\mathcal{W}_0^L$ and $\mathcal{W}_1^L$, and symbols $\ov{1},\dots, \ov{n_0}$ and $\ov{1},\dots, \ov{n_0+1}$ to the consecutive levels of $\mathcal{W}_0^L$ and $\mathcal{W}_1^L$, we obtain a coding of points from $W$ by two-sided sequences over the alphabet $\{1,\dots,n_0,n_0+1,\ov{1},\dots, \ov{n_0},\ov{n_0+1}\}$. The remaining part of the proof of \cref{tmain} stays the same as in \cref{se2}.

\section{Hereditary system of positive entropy whose simplex of invariant measures is not Poulsen}

In this section we will show that there are hereditary systems of positive entropy whose simplex of invariant measures is not Poulsen. For this, we recall an example, which was used in~\cite{MR3356811} to show that there are hereditary systems which are not intrinsically ergodic.

Given a block $C\in \{0,1\}^n$, let $x_C\in \{0,1\}^\Z$ be the infinite concatenation of $C$ and let $X_C\subset \{0,1\}^\Z$ stand for the orbit closure of $x_C$ (equal to its orbit). Let $\widetilde{X}_C$ be the smallest hereditary subshift containing $X_C$. Finally, let $\nu_C$ be the periodic measure for which $x_C$ is a generic point. Notice that if $\text{supp }C \neq \emptyset$ then $\nu_C\neq \delta_{(\dots,0,0,0,\dots)}$.

Let $A:=101001000$, $B:=101000100$ and consider $X:=X_A\cup X_B$. Then the smallest hereditary subshift $\widetilde{X}$ containing $X$ equals $\widetilde{X}_A\cup \widetilde{X}_B$. Since both $(S,X_A)$ and $(S,X_B)$ are uniquely ergodic (with zero entropy), it follows by \cref{wn} that both $\mathcal{P}(S,\widetilde{X}_A)$ and $\mathcal{P}(S,\widetilde{X}_B)$ are Poulsen simplices (they are both non-trivial as $\nu_A,\nu_B\neq \delta_{(\dots,0,0,0,\dots)}$).

Suppose that $\mathcal{P}(S,\widetilde{X})$ is also Poulsen and take its arbitrary element $\nu$. Since $\nu$ can be approximated by ergodic measures and $\mathcal{P}^e(S,\widetilde{X})=\mathcal{P}^e(S,\widetilde{X}_A)\cup \mathcal{P}^e(S,\widetilde{X}_B)$, it follows that
\begin{equation}\label{poul}
\nu\in \mathcal{P}(S,\widetilde{X}_A)\cup \mathcal{P}(S,\widetilde{X}_B).
\end{equation}
Let now $\nu:=\frac{1}{2}(\nu_A+\nu_B)$. Since $\nu_A(X_A)=\nu_B(X_B)=1$ and $X_A\cap X_B=\emptyset$, it follows that $\nu(\widetilde{X}_A)=\nu(\widetilde{X}_B)=1/2$. This contradicts~\eqref{poul} and we conclude that $\mathcal{P}(S,\widetilde{X})$ cannot be Poulsen.

Finally, notice that it is easy to modify the above example, so that $(S,\widetilde{X})$ becomes intrinsically ergodic (i.e.\ it has exactly one measure of maximal entropy). E.g., we can take $A':=111001000$ instead of $A$ and consider $X':=X_{A'}\cup X_B$. Then $M_\ast(\nu_{A'}\otimes B(1/2,1/2))$ (where $B(1/2,1/2)$ stands for the Bernoulli measure $(1/2,1/2)$ on $\{0,1\}^\Z$) is the unique measure of maximal entropy for $(S,\widetilde{X'})$, see~\cite{MR3356811} for more details.

\small

\vspace{2ex}

\noindent Joanna Ku\l aga-Przymus:\\
Institute of Mathematics, Polish Academy of Sciences, \'{S}niadeckich 8, 00-956 Warsaw, Poland \\
and
\\
Faculty of Mathematics and Computer Science, Nicolaus Copernicus University, Chopina 12/18, 87-100 Toru\'{n}, Poland\\
\textit{E-mail address:} \texttt{joanna.kulaga@gmail.com}\\
\\
\noindent Mariusz Lema\'nczyk:\\
Faculty of Mathematics and Computer Science, Nicolaus Copernicus University, Chopina 12/18, 87-100 Toru\'{n}, Poland\\
\textit{E-mail address:} \texttt{mlem@mat.umk.pl}\\
\\
\noindent
Benjamin Weiss:\\
Institute of Mathematics, Hebrew University of Jerusalem, Jerusalem,
Israel\\
\textit{E-mail address:} \texttt{weiss@math.huji.ac.il}

\end{document}